\documentclass[12pt]{article}
\usepackage{url,mathtools,bm}
\textwidth 6.3in \textheight 8.8in \oddsidemargin 0in
\topmargin -0.3 in
\usepackage{enumerate,amsfonts, graphicx, amsmath}
\usepackage{graphicx}
\usepackage{epsfig}
\usepackage{epstopdf}
\usepackage{multirow,color}
\usepackage{algorithm}
\usepackage{algpseudocode}

\newcommand\argmin{\mathop{\rm argmin}}

\newcommand\Span{\mathop{\rm span}}
\newcommand\Diag{\mathop{\rm Diag}}

\algnewcommand{\IIF}[1]{\State\algorithmicif\ #1\ \algorithmicthen}
\algnewcommand{\ENDIIF}{\unskip\ \algorithmicend\ \algorithmicif}

\title{Nonlinear conjugate gradient for smooth
  convex functions\thanks{Supported
in part by a grant  from the Natural Sciences and Engineering Research Council
(NSERC) of Canada.}}
\author{Sahar Karimi\thanks{Department of Combinatorics \& Optimization,
University of Waterloo, 200 University Ave.~W., Waterloo, ON, N2L 3G1,
Canada, {\tt sahar.karimi@gmail.com}.} \and 
Stephen Vavasis\thanks{Department of Combinatorics \& Optimization,
University of Waterloo, 200 University Ave.~W., Waterloo, ON, N2L 3G1,
Canada, {\tt vavasis@uwaterloo.ca.}}}
%\renewtheorem{thm}{Theorem}
%\renewtheorem{lem}{Lemma}
%\newtheorem{cor}{Corollary}
%\newtheorem{res}{Result}
%\newtheorem{clm}{claim}

\renewcommand\b{\bm{b}}

\newcommand\e{\bm{e}}
\newcommand\g{\bm{g}}
\newcommand\p{\bm{p}}

\newcommand\R{\mathbb{R}}
\newcommand\s{\bm{s}}

\renewcommand\u{\bm{u}}

\renewcommand\v{\bm{v}}
\newcommand\w{\bm{w}}
\newcommand\x{{\bm{x}}}
\newcommand\y{\bm{y}}
\newcommand\z{\bm{z}}
\newcommand\eps{\epsilon}

\newcommand\bz{\bm{0}}

\newtheorem{theorem}{Theorem}

\newenvironment{proof}{\noindent {\bf Proof. }}{\hfill $\square$}
\begin{document}
\maketitle
\begin{abstract}
  The method of nonlinear conjugate gradients (NCG)
  is widely used in practice for unconstrained
  optimization, but it satisfies weak complexity bounds at best when
  applied to smooth convex functions.  In contrast, Nesterov's
  accelerated gradient (AG) method is optimal up to constant
  factors for this class.

  However, when specialized to quadratic function, conjugate gradient is
  optimal in a strong sense among function-gradient methods.  Therefore, there
  is seemingly a gap in the menu of available algorithms: NCG,
  the optimal
  algorithm for quadratic functions that also exhibits good practical performance
  for general functions, has poor complexity bounds compared to AG.

  We propose an NCG method
  called C+AG (``conjugate plus accelerated gradient'')
  to close this gap, that is, it is optimal for quadratic functions
  and still satisfies the best possible complexity bound for more
  general smooth convex functions.  It takes
  conjugate gradient steps until insufficient progress is
  made, at which time it switches to accelerated gradient
  steps, and later retries conjugate gradient.  The proposed method has
  the following theoretical properties: (i) It is identical to
  linear conjugate gradient (and hence terminates finitely) if the
  objective function is quadratic;
  (ii) Its running-time bound is $O(\eps^{-1/2})$
  gradient evaluations
  for an $L$-smooth convex function, where $\eps$ is the desired
  residual reduction,
  (iii) Its running-time bound
  is $O(\sqrt{L/\ell}\ln(1/\eps))$
  if the function is both $L$-smooth and $\ell$-strongly convex.
  We also conjecture and outline a proof that a variant of the method has the property:
  (iv) It is $n$-step quadratically convergent for a function
  whose second derivative is smooth and invertible at the optimizer.
  Note that the bounds in (ii) and (iii) match AG and are the
  best possible, i.e., they match lower bounds up to constant factors for
  the classes of functions under consideration.  On the other hand, (i) and
  (iv) match NCG.

  In computational tests, the function-gradient evaluation
  count for the C+AG method typically
  behaves as whichever is better of AG or classical NCG.
  In some test cases it outperforms both.
\end{abstract}

%\begin{keywords}Conjugate Gradient, First Order Algorithms, 
%Convex Optimization
%\end{keywords}

\section{First-order methods for smooth convex functions}

The problem under consideration is minimizing an unconstrained
$L$-smooth convex function
$f:\R^n\rightarrow\R$.  Recall that a convex function is
{\em $L$-smooth}
if it is differentiable and
for all $\x,\y\in\R^n$,
\begin{equation}
f(\y)-f(\x)-\nabla f(\x)^T(\y-\x)\le L\Vert\x-\y\Vert^2/2.
\label{eq:smoothconvdef}
\end{equation}
Note that convexity implies that the quantity on the left-hand side of this
inequality is nonnegative.
We will also sometimes consider {\em $\ell$-strongly convex} functions, which
are defined to be those functions satisfying
\begin{equation}
f(\y)-f(\x)-\nabla f(\x)^T(\y-\x)\ge \ell\Vert\x-\y\Vert^2/2
\label{eq:strongconvdef}
\end{equation}
for all $\x,\y\in\R^n$ for some modulus $\ell\ge 0$.  (This definition
presumes differentiability.  The class of strongly convex
functions includes nondifferentiable functions as well, but those functions
are not smooth and are therefore not relevant to this paper.)
Note that $\ell=0$ 
means simply that $f$ is differentiable and convex.

Nonlinear conjugate gradient (NCG) \cite{NocedalWright} is perhaps the
most widely used first-order method for unconstrained optimization.
It is an extension of linear conjugate gradient (LCG)
introduced by Hestenes and
Stiefel \cite{hestenesstiefel} to minimize convex quadratic functions.
The smoothness modulus $L$ for a quadratic function $f(\x)=\x^TA\x/2 -\b^T\x$,
where $A\in\R^{n\times n}$ is positive semidefinite, is the maximum eigenvalue
of $A$.
LCG is optimal for quadratic functions in a strong sense that
$f(\x_k)$, where $k$ is the $k$th iterate, is minimized
over all choices of $\x_k$ lying in the $k$th Krylov space leading to the
estimate of
$k=O(\ln(1/\eps)\sqrt{L/\ell})$ iterations to reduce the
residual of $f$ by a factor $\eps$.

Stated precisely,
Daniel \cite[Theorem 1.2.2]{Daniel}
proved the following bound for the $k$th iteration of LCG:
\begin{equation}
  f(\x_k)-f^* \le 4 \left(\frac{1-\sqrt{\ell/L}}{1+\sqrt{\ell/L}}\right)^{2k}
  (f(\x_0)-f^*).
  \label{eq:daniel}
\end{equation}
Here and in the remainder of the paper, $f^*$ stands for
$\min_{\x\in\R^n}f(\x)$ and is assumed to be attained.
Therefore, if we define the residual reduction
$$\epsilon_1 := \frac{f(\x_k)-f^*}{f(\x_0)-f^*},$$
then a rearrangement and application of simple bounds in \eqref{eq:daniel}
shows that $\eps_1$ reduction is achieved after $k=\ln(4/\epsilon_1)\cdot \sqrt{L/\ell}/2$
iterations.

However, if the class of
problems is enlarged to include all smooth, strongly convex functions,
then NCG fails to satisfy this complexity bound and could
perform even worse than steepest descent as shown by Nemirovsky and
Yudin \cite{NemYud83}.
Nonetheless, in practice, nonlinear conjugate gradient is difficult
to beat even by algorithms with better
theoretical guarantees, as observed in our experiments
and by others, e.g., Carmon et al.~\cite{Carmon}.

Several first-order methods have been proposed for smooth
strongly convex functions that achieve the running time
$O(\ln(1/\eps)\sqrt{L/\ell})$ 
including
Nemirovsky and Yudin's
\cite{NemYud83} method, which requires solution of
a two-dimensional subproblem on
each iteration, Nesterov's accelerated gradient (AG) \cite{Nesterov:k2}, and
Geometric Descent (GD) \cite{bubeck}.
In the case of GD, the bound
is on iterations rather than gradient evaluations since each iteration
requires additional gradient evaluations for the line search.
It is known
that $\Omega(\ln(1/\eps)\sqrt{L/\ell})$ gradient evaluations are required to
reduce the objective function residual by a factor of $\eps$;
see, e.g., the book of Nesterov \cite{Nesterov:book} for
a discussion of the lower bound result.
Since the algorithms mentioned in this paragraph match the lower bound,
these algorithms are optimal up to a constant factor.
In the case of smooth convex (not strongly convex) functions, AG
requires $O(\eps^{-1/2})$ iterations.

The precise bound for accelerated gradient for
strongly convex functions is stated below in
\eqref{eq:nestrhs}, and this bound can be rearranged
and estimated as follows.
Define the residual reduction
$$\epsilon_2 := \frac{f(\x^k)-f^*}{L\Vert\x_0-\x^*\Vert^2}.$$
This reduction can be attained after no more than
$k=\ln(1/\epsilon_2)\cdot \sqrt{L/\ell}$
iterations.

A technical issue in comparing the bounds is the
different definitions of $\epsilon_1$ and $\epsilon_2$.
A standard result of extremal eigenvalues states that
$$\ell\Vert \x_0-\x^*\Vert^2/2 \le  f(\x_0)-f^* \le
L\Vert \x_0-\x^*\Vert^2/2.$$
Therefore, it follows that $\epsilon_2 \le 2\epsilon_1 \le (L/\ell)\epsilon_2.$
Thus, $\epsilon_1$ tightly controls $\epsilon_2$,
whereas $\epsilon_2$ only loosely controls $\epsilon_1$,
and 
therefore, the LCG bound is slightly better than
the corresponding AG bound in this technical sense.

In practice, conjugate gradient for quadratic functions is typically
superior to AG applied to quadratic functions.  The reason appears to
be not so much the technical reason mentioned in the previous paragraph but
the fact that LCG is optimal in a stronger sense than AG: the iterate $\x_k$ is
the true minimizer of $f(\x)$ over the $k$th Krylov space, which is
defined to be $\{\b,A\b,\ldots,A^{k-1}\b\}$
assuming $\x_0=\bz$.
For quadratic problems in which the eigenvalues lie in a
small number of clusters, one expects convergence much faster than
$O(\ln(1/\eps)\sqrt{L/\ell})$.
But even in the case of spread-out eigenvalues, conjugate gradient often
bests accelerated gradient.
Table~\ref{table:quad} shows the results for these
algorithms for three $1000\times 1000$ matrices:
\begin{align*}
      A_1&=\Diag(\underbrace{1,\ldots,1}_{500},
      \underbrace{1000,\ldots,1000}_{500}),\\
      A_2&=\Diag(\underbrace{1,\ldots,1}_{250},
      \underbrace{500,\ldots,500}_{250},
      \underbrace{1000,\ldots,1000}_{500}),\\
      A_3&=\Diag(1, 4, 9, \ldots, 1000^2),
\end{align*}
two of which have clustered eigenvalues, and the third
has spread-out eigenvalues.

\begin{table}
  \begin{center}
    \caption{Minimization of $f(\x)=\x^TA\x/2-\b^T\x$ for diagonal
      matrices $A_1,A_2,A_3$ described in the text with
     $\b^T=[\sin 1,\sin 2,\ldots, \sin 1000]$.
   Tolerance is $\Vert \nabla f(\x)\Vert \le 10^{-8}$}.
  \label{table:quad}
  \begin{tabular}{lrrrr}
    \hline 
    &  LCG & C+AG & NCG & AG \\
    \hline 
     $A_1$: \# iterations                    & 2   & 3 & 2 & 9167 \\
     $A_1$: \# function-gradient evaluations & 2  & 27 & 5 & 18357 \\
    \hline 
     $A_2$: \# iterations                    &  3  & 4 & 3 & 10267 \\
     $A_2$: \# function-gradient evaluations &  3 & 30 & 7 & 20557 \\
    \hline 
     $A_3$: \# iterations                    &  1509  & 1512 & 1515 & $>10^6$ \\
     $A_3$: \# function-gradient evaluations &  1509  & 3065 & 3036 & $>10^6$ \\
    \hline
  \end{tabular}
  \end{center}
\end{table}

Note that the three conjugate gradient methods require roughly
the same number of iterations, all much less than AG.  The function
evaluations are: 1 per LCG iteration (i.e., a single matrix-vector
multiplication), 2 per C+AG iteration, approximately 2 per NCG iteration,
and 1 per AG iteration.  The requirement for more than one function evaluation
per NCG iteration stems from the need to select a step-size.  We return
to this point below.  Table \ref{table:quad} shows results for the
adaptive versions of C+AG and AG for
the case that $L$ is not known a priori, discussed
in Section~\ref{sec:estL}.  Thus, further function-gradient evaluations 
are needed to estimate $L$, and the number of
function-gradient evaluations is increased accordingly.
The reader will note from Table~\ref{table:quad} that
C+AG required only $2$ iterations for $A_2$ but $27$ function
evaluations due to the overhead of initially estimating $L$, and a similar
pattern is noted for $A_3$.  The adaptive estimation doubles the function-gradient
evaluations for AG from 1 to 2 per iteration.

In the previous paragraph and for the remainder of this paper, we speak of
a ``function-gradient evaluation'', which means, for a given $\x$, the
simultaneous evaluation of $f(\x)$ and $\nabla f(\x)$.  We regard
function-gradient evaluation as the atom of work.  This metric
may be inaccurate in the setting that $f(\x)$ can be evaluated
much more efficiently than $\nabla f(\x)$ for an algorithm that separately uses
$f$ and $\nabla f$.  However, 
in most applications evaluating $f,\nabla f$ together is not much more
expensive than evaluating either separately.  Theoretically, gradient
evaluation is never more than a constant factor larger than function
evaluation due to the existence of reverse-mode automatic differentiation;
see, e.g., \cite{NocedalWright}.

It is also known that NCG is fast for functions
that are nearly quadratic.  In particular, a classic result of
Cohen \cite{Cohen} shows that NCG exhibits $n$-step quadratic
convergence near the minimizer provided that the objective function
is nearly quadratic and strongly convex in this neighborhood.

Thus, there is seemingly a gap in the menu of available algorithms
because NCG, though optimal for quadratic functions and fast
for nearly quadratic functions, is not able to achieve reasonable
complexity bounds for the larger class of smooth, strongly convex
functions.  On the other hand, the methods with the good complexity
bounds for this class are all suboptimal for quadratic functions.

We propose C+AG to close this gap.  The method is based on the following ideas:
\begin{enumerate}
\item
  A line-search for NCG described
  in Section~\ref{sec:linesearch} is used that is exact for quadratic functions but
  could be poor for general functions.  Failure of the line-search is
  caught by the progress measure described in the next item.
\item
  A progress measure described in
  Section~\ref{sec:progress} 
  for nonlinear conjugate gradient
  based on Nesterov's estimate sequence is checked.
  The progress measure is inexpensive to evaluate on every iteration.
\item
  Restarting in the traditional sense of NCG (i.e., an occasional steepest
  descent step) is used, as described
  in Section~\ref{sec:restart}
\item
  On iterations when insufficient progress is made, C+AG switches to
  AG to guarantee reduction in the progress measure as explained
  in Section~\ref{sec:switching}.
\item
  C+AG switches from AG back to NCG after a test measuring nearness to a quadratic
  is satisfied.
  When CG resumes, the progress measure may be modified 
  to a more
  elaborate procedure
  discussed in Appendix~\ref{sec:restartcg}, although this elaborate
  progress measure is disabled by default.
\end{enumerate}
When applied to a quadratic function, the progress measure is
always satisfied (this is proved in Section~\ref{sec:progress}), and hence
all steps will be NCG (and therefore LCG) steps.  The fact that the C+AG satisfies the
$O(\ln(1/\eps)\sqrt{L/\ell})$ running-time bound of AG for general
convex functions follows because
of the progress measure.  The conjectured final theoretical property of the method,
namely, $n$-step quadratic convergence, is described in the
Appendix~\ref{sec:nstep}.
Computational testing in Section~\ref{sec:comp} indicates that the running
time of the method measured in function-gradient evaluations is roughly equal to
whichever of AG or NCG is better for the problem at hand,  and in some cases
it outperforms both of them.

Both AG and C+AG use prior knowledge of $L,\ell$.  However, $L$ can
be estimated by both methods, and $\ell$ may be taken
to be 0 if no further information is available. Thus, both algorithms
are useable, albeit slower,
in the absence of prior knowledge of
these parameters.  We return to this topic
in Section~\ref{sec:estL}.

We conclude this introductory section with a few remarks about our
previous unpublished manuscript
\cite{KarimiVavasis-preprint}.   That work explored connections between
AG, GD, and LCG, and proposed a hybrid of NCG and GD.  However, that work
did not address the number of function-gradient evaluations because the 
the evaluation count for the line search was not analyzed.
Closer to what is proposed here is a hybrid algorithm from the
first author's PhD thesis \cite{Karimi:Thesis}.
Compared to the PhD thesis, the proposed algorithm
herein has several improvements including the line search and the method for
switching from AG to CG as well as additional analysis.

\section{NCG and line search}
\label{sec:linesearch}

The full C+AG algorithm is presented in Section~\ref{sec:pseudocode}.
In this section, we present
the NCG portion
of C+AG, which is identical to most other NCG routines and is as follows
\begin{tabbing}
  +++\=+++\=+++\=\kill
  \> $\x_0:=\bz;$ \\
  \> $\p_0:=-\nabla f(\x_0);$ \\
  \> $k:=0;$ \\
  \> while $\Vert \nabla f(\x_k)\Vert > \mathtt{tol}$ \\
  \> \> Select $\alpha_{k}$; \\
  \> \> $\x_{k+1} := \x_k + \alpha_{k}\p_{k}  ;$ \\
  \> \> Confirm that $\x_{k+1}$ satisfies the progress measure. \\
  \> \> Select $\beta_{k+1}$; \\
  \> \> $\p_{k+1} := -\nabla f(\x_{k+1})+\beta_{k+1}\p_{k};$ \\
  \> \> $k:= k+1$; \\
  \> end while
\end{tabbing}

For $\beta_{k+1}$ we use the Hager-Zhang formula
\cite{HagerZhang}, which is as follows:
\begin{align*}
  \hat\y &:= \g_{k+1} - \g_k; \\
  \beta^1 & :=
  \left(\hat\y - \p_{k+1}\cdot
  \frac{2 \Vert\hat\y\Vert^2}
       {\hat\y^T\p_{k+1}}\right)^T\frac{\g_{k+1}}
       {\hat\y^T\p_{k+1}}, \\
       \beta^2 & := \frac{-1}
            {\Vert\p_{k+1}\Vert\cdot\min\left(.01\Vert\g_0\Vert,
              \Vert\g_{k+1}\Vert\right)}. \\
            \beta_{k+1} &:= \max(\beta^1,\beta^2).
\end{align*}

For $\alpha_{k}$, we use the following formulas.
\begin{align}
  \tilde{\x} & := \x_k + \p_{k}/L, \label{eq:alphak1} \\
  \s &:= L(\nabla f(\tilde{\x}) - \nabla f(\x_k)), \label{eq:alphak2}\\
  \alpha_{k} &:= \frac{-\nabla f(\x_k)^T\p_{k}}{\p_{k}^T\s}. \label{eq:alphak3}
\end{align}
The rationale for this choice of $\alpha_{k}$ is as follows.  In the case
that $f(\x)=\x^TA\x/2-\b^T\x$ (quadratic), $\nabla f(\x_k)=A\x_k-\b$
and $\nabla f(\tilde \x)=A\tilde \x-\b=A\x_k-\b+A\p_{k}/L$, and
therefore $\s=A\p_{k}$ and $\p_{k}^T\s=\p_{k}^TA\p_{k}$.
In this case, $\alpha_{k}$
is the exact minimizer of the univariate quadratic function
$\alpha\mapsto f(\x_k+\alpha\p_{k})$.

In the case of a nonquadratic function, this choice of $\alpha_{k}$
could be inaccurate, which could lead to a poor choice for
$\x_{k+1}$.  The progress measure in the next section
will catch this failure and select a new
search direction $\p_{k}$ in this case.

As for the choice of $\beta_k$, we mention that Dai and Yuan \cite{DaiYuan2001}
carried out an extensive convergence study of different formulas for
$\beta_k$ (not including the Hager-Zhang formula, but covering many
others), and in the end, proposed a certain
hybrid formula for $\beta_k$.
Note that Dai and Kou \cite{DaiKou} propose another family with
the interesting property \cite[Theorem 4.2]{DaiKou} that convergence
is guaranteed for smooth, strongly convex functions, although
no rate is derived.

Our
experiments (not reported here) suggest that the choice of
$\beta_k$ is not critical to the performance of C+AG. 
The Dai-Yuan paper points out that certain choices of $\beta_k$ will
lead to nonconvergence if the line-search fails to satisfy the strong Wolfe conditions.
However, our line-search does not satisfy even the weak Wolfe conditions.
In other words, the criteria used in the previous literature to
distinguish among the choices of formulas for
$\beta_k$ may not be applicable to C+AG.

\section{Progress measure}
\label{sec:progress}

The progress measure is taken from Nesterov's \cite{Nesterov:book} analysis
of the AG method.  The basis of
the progress measure is a sequence of strongly
convex quadratic functions
$\phi_0(\x), \phi_1(\x),\ldots$ called an ``estimate sequence''
in \cite{Nesterov:book}.

The sequence is initialized with
\begin{equation}
\phi_0(\x):=f(\x_0)+\frac{L}{2}
\Vert\x-\v_0\Vert^2,
\label{eq:phi0}
\end{equation}
where $\v_0:=\x_0$.  
Then $\phi_k$ for $k\ge 1$ is defined in terms of its predecessor
via:
\begin{equation}
\phi_{k+1}(\x)=(1-\theta_k)\phi_k(\x) +
\theta_k\left[f(\bar\x_k)+\nabla f(\bar\x_k)^T(\x-\bar\x_{k})+
  \frac{\ell}{2}\Vert\x-\bar\x_k\Vert^2\right].
\label{eq:phiupd}
\end{equation}
Here, $\theta_k\in(0,1)$ and $\bar\x_k\in\R^n$ are detailed
below.
Note that the Hessian of $\phi_0$ and of
the square-bracketed quadratic
function in \eqref{eq:phiupd} are both multiples of $I$, and therefore
inductively the Hessian of $\phi_k$ is a multiple of $I$ for all $k$.
In other words, for
each $k$ there exists $\gamma_{k},\v_{k},
\phi^*_{k}$ such that
$\phi_k(\x)=\phi_k^*+(\gamma_k/2)\Vert\x-\v_k\Vert^2$, where $\gamma_k>0$.
The formulas for $\gamma_{k+1}$, $\v_{k+1}$, $\phi_{k+1}^*$  are straightforward
to obtain from \eqref{eq:phiupd}
given $\gamma_{k}$, $\v_{k}$, $\phi_{k}^*$  as well as
$\theta_k$ and $\bar\x_k$.
These formulas appear in \cite[p.~73]{Nesterov:book} and are repeated here
for the sake of completeness:
\begin{align}
  \gamma_{k+1}&:=(1-\theta_k)\gamma_k+\theta_k\ell, \label{eq:gammak}\\
  \v_{k+1}&:=\frac{1}{\gamma_{k+1}}
    [(1-\theta_k)\gamma_k\v_k+\theta_k\ell\bar\x_k-\theta_k\nabla f(\bar\x_k)],
    \label{eq:v_k} \\
    \phi_{k+1}^* &:= (1-\theta_k)\phi_k^* + \theta_kf(\bar\x_k)
    -\frac{\theta_k^2}{2\gamma_{k+1}}\Vert\nabla f(\bar\x_k)\Vert^2 \notag\\
    &\hphantom{:=}\quad\mbox{}
    +\frac{\theta_k(1-\theta_k)\gamma_k}{\gamma_{k+1}}
    (\ell \Vert\bar\x_k-\v_k\Vert^2/2 + \nabla f(\bar\x_k)^T(\v_k-\bar\x_k)).
    \label{eq:phi_k}
\end{align}
We call the sequence $\bar\x_0,\bar\x_1,\ldots$ the {\em gradient sequence}
because the gradient is evaluated at these points.

To complete the formulation of $\phi_k(\cdot)$, we now specify
$\theta_k$ and $\bar\x_k$.
Scalar $\theta_k$ is selected as the positive root
of the quadratic equation
\begin{equation}
  L\theta_k^2+(\gamma_k-\ell)\theta_k-\gamma_k=0.
  \label{eq:thetaeq}
\end{equation}
The fact that $\theta_k\in (0,1)$ is easily seen by observing
the sign-change in this quadratic over $[0,1]$.
A consequence of \eqref{eq:thetaeq} and \eqref{eq:gammak}
is the identity
\begin{equation}
  \frac{\theta_k^2}{2\gamma_{k+1}} = \frac{1}{2L}.
  \label{eq:thetaLrel}
\end{equation}

Finally, C+AG has three cases for selecting gradient sequence $\bar\x_k$.
The first case
is $\bar\x_k:=\x_k$.  The second and third cases are presented later.

The principal theorem assuring progress is Theorem 2.2.2
of Nesterov \cite{Nesterov:book}, which we restate
here for the sake of completeness.  The validity
of this theorem does not depend on how the gradient sequence $\bar\x_k$ is
selected in the definition of $\phi_k(\cdot)$.

\begin{theorem} (Nesterov)
  Suppose that for each $k=0,1,2,3,\ldots$, there exists
  a point $\x_k$, called the {\em witness},  satisfying
  $f(\x_k)\le \min_\x\phi_k(\x)\equiv \phi_k^*$.  Then
  \begin{equation}
  f(\x_k)-f^*\le L
  \min\left(\left(1-\sqrt{\ell/L}\right)^k, \frac{4}{(k+2)^2}\right)\Vert \x_0-\x^*\Vert^2.
  \label{eq:nestrhs}
  \end{equation}
  \label{thm:nestthm}
\end{theorem}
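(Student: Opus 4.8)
The plan is to reproduce Nesterov's estimate-sequence argument. Introduce the shrinkage factors $\lambda_k:=\prod_{i=0}^{k-1}(1-\theta_i)$, so that $\lambda_0=1$ and $\lambda_{k+1}=(1-\theta_k)\lambda_k$. The core of the proof is to show that the functions $\phi_k$ built by \eqref{eq:phi0}--\eqref{eq:phiupd} under-estimate $f$ in the averaged sense
\[
\phi_k(\x)\le(1-\lambda_k)f(\x)+\lambda_k\phi_0(\x)\qquad\text{for all }\x\in\R^n.
\]
First I would prove this by induction on $k$. The base case holds with equality since $\lambda_0=1$. For the inductive step, strong convexity \eqref{eq:strongconvdef} (evaluated at the base point $\bar\x_k$, regardless of how $\bar\x_k$ is chosen) shows that the bracketed quadratic in \eqref{eq:phiupd} is a global under-estimator of $f$, hence is $\le f(\x)$ pointwise; inserting this bound and the inductive hypothesis into \eqref{eq:phiupd} and regrouping via $\lambda_{k+1}=(1-\theta_k)\lambda_k$ and $1-\lambda_{k+1}=(1-\theta_k)(1-\lambda_k)+\theta_k$ delivers the inequality at level $k+1$.

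Next I would turn this into a residual bound. Evaluating the averaged inequality at $\x=\x^*$ and combining it with the hypothesis $f(\x_k)\le\phi_k^*\le\phi_k(\x^*)$ gives $f(\x_k)\le(1-\lambda_k)f^*+\lambda_k\phi_0(\x^*)$. Since $\phi_0(\x^*)=f(\x_0)+\tfrac{L}{2}\Vert\x^*-\x_0\Vert^2$ by \eqref{eq:phi0}, and since $L$-smoothness \eqref{eq:smoothconvdef} together with $\nabla f(\x^*)=\bz$ yields $f(\x_0)-f^*\le\tfrac{L}{2}\Vert\x_0-\x^*\Vert^2$, subtracting $f^*$ collapses the right-hand side to give
\[
f(\x_k)-f^*\le\lambda_k\,L\Vert\x_0-\x^*\Vert^2.
\]
At this point the theorem is reduced to bounding $\lambda_k$ by each of the two quantities inside the minimum in \eqref{eq:nestrhs}.

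For the remaining estimates on $\lambda_k$ I would use the identity $L\theta_k^2=\gamma_{k+1}$, which is \eqref{eq:thetaLrel} rewritten (and which follows directly from \eqref{eq:thetaeq} and \eqref{eq:gammak}). For the geometric bound, \eqref{eq:gammak} writes $\gamma_{k+1}$ as a convex combination of $\gamma_k$ and $\ell$, so with $\gamma_0=L\ge\ell$ from \eqref{eq:phi0} an easy induction gives $\gamma_{k+1}\ge\ell$; hence $\theta_k^2=\gamma_{k+1}/L\ge\ell/L$ and $\lambda_k=\prod_{i<k}(1-\theta_i)\le(1-\sqrt{\ell/L})^k$. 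For the $O(k^{-2})$ bound I would set $\ell=0$, so that \eqref{eq:gammak} becomes $\gamma_{k+1}=(1-\theta_k)\gamma_k$; then $\gamma_k$ and $\lambda_k$ obey the same recursion and $\gamma_k=\gamma_0\lambda_k=L\lambda_k$, whence $\theta_k^2=\gamma_{k+1}/L=\lambda_{k+1}$. Writing $\theta_k=1-\lambda_{k+1}/\lambda_k=\sqrt{\lambda_{k+1}}$ and telescoping $1/\sqrt{\lambda_k}$ then gives $1/\sqrt{\lambda_{k+1}}-1/\sqrt{\lambda_k}\ge\tfrac12$, so $1/\sqrt{\lambda_k}\ge1+k/2$ and $\lambda_k\le4/(k+2)^2$.

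The step I expect to be the most delicate is the final telescoping in the convex case: one must lower-bound $1/\sqrt{\lambda_{k+1}}-1/\sqrt{\lambda_k}$ using $\theta_k^2=\lambda_{k+1}$ together with the monotonicity $\lambda_{k+1}\le\lambda_k$ (itself a consequence of $\theta_k\in(0,1)$), and the constant must be pinned down exactly so that the bound reads $4/(k+2)^2$ rather than a weaker multiple. This is precisely where the normalization $\gamma_0=L$ from \eqref{eq:phi0} enters, since it forces $\theta_k^2=\lambda_{k+1}$ and hence the clean increment of $\tfrac12$. Everything else is routine algebra once the estimate-sequence inequality is in hand.
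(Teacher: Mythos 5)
Your proposal correctly reconstructs the estimate-sequence argument of Nesterov (Theorem 2.2.2 together with Lemmas 2.2.3--2.2.4 of \cite{Nesterov:book}), which is exactly the proof the paper relies on by citation rather than reproducing, so the approach is the same and the main chain of inequalities checks out. Two small repairs are needed. First, the bound $\lambda_k\le 4/(k+2)^2$ must be established for every $\ell\ge 0$, not only for $\ell=0$: the theorem asserts the minimum of the two rates, and for $\ell>0$ with $k$ small the term $4/(k+2)^2$ can be the smaller one, so you cannot discharge it by specializing. The fix stays inside your framework: \eqref{eq:gammak} always gives $\gamma_{k+1}\ge(1-\theta_k)\gamma_k$, hence $\gamma_k\ge L\lambda_k$ and $\theta_k=\sqrt{\gamma_{k+1}/L}\ge\sqrt{\lambda_{k+1}}$, which is all your telescoping step actually uses (the increment $1/\sqrt{\lambda_{k+1}}-1/\sqrt{\lambda_k}\ge\tfrac12$ only needs $\lambda_k-\lambda_{k+1}=\theta_k\lambda_k\ge\sqrt{\lambda_{k+1}}\,\lambda_k$ and $\lambda_{k+1}\le\lambda_k$). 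Second, you read $\gamma_0=L$ off \eqref{eq:phi0}, but the paper's displayed normalization $\phi_k(\x)=\phi_k^*+\gamma_k\Vert\x-\v_k\Vert^2$ would literally give $\gamma_0=L/2$; the recursions \eqref{eq:gammak}--\eqref{eq:phi_k} and the identity \eqref{eq:thetaLrel} are consistent only with Nesterov's convention $\phi_k(\x)=\phi_k^*+\tfrac{\gamma_k}{2}\Vert\x-\v_k\Vert^2$ and $\gamma_0=L$ (as in the pseudocode's initialization), and that normalization is what pins the increment at exactly $\tfrac12$ and hence the constant $4$ in $4/(k+2)^2$, so it is worth stating explicitly.
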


The sufficient-progress test used for CG is the condition of
the theorem, namely, $f(\x_k)\le \phi_k^*$.
That is, in the case of CG, the witness sequence is also the
main sequence computed by CG.  As long as this test
holds, the algorithm continues to take CG steps, and the right-hand
side of \eqref{eq:nestrhs} assures us that the optimal complexity
bound is achieved (up to constants) in the case of both
smooth convex functions and smooth, strongly convex functions.

The main result of this section is the following theorem, which states
that if the objective function is quadratic, then the progress
measure is always satisfied by conjugate gradient.

\begin{theorem}
  Assume $\v_0=\x_0$.
  If $f(\x)=\frac{1}{2}\x^TA\x-\b^T\x$, where $A$ is positive definite,
  and if the gradient sequence $\bar\x_k$ and witness sequence are
  the same as the CG sequence
  $\bar\x_k:=\x_k$, then $f(\x_k)\le \phi_k^*$ for every iteration
  $k$.
  \label{thm:quad_suffprog}
\end{theorem}

\begin{proof}
  The result clearly holds when $k=0$ by \eqref{eq:phi0}.  
  Assuming $f(\x_k)\le \phi_k^*$, we now show 
  that $f(\x_{k+1})\le \phi_{k+1}^*$
  For $k\ge 0$,
  \begin{align}
    \phi_{k+1}^*
    &=
    (1-\theta_k)\phi_k^*+\theta_k f(\x_k)
    -\frac{\theta_k^2}{2\gamma_{k+1}}\Vert \nabla f(\x_k)\Vert^2 \notag\\
    &\hphantom{}\quad\mbox{}
    +\frac{\theta_k(1-\theta_k)\gamma_k}{\gamma_{k+1}}
    \left(\ell \Vert \x_k-\v_k\Vert^2+
    \nabla f(\x_k)^T(\v_k-\x_k)\right)\label{eq:cgok1} \\
    &\ge
    f(\x_k)
    -\frac{\theta_k^2}{2\gamma_{k+1}}\Vert \nabla f(\x_k)\Vert^2
    +\frac{\theta_k(1-\theta_k)\gamma_k}{\gamma_{k+1}}
    \left(\nabla f(\x_k)^T(\v_k-\x_k)\right) \label{eq:cgok2}\\
    & = 
    f(\x_k)
    -\frac{1}{2L}\Vert \nabla f(\x_k)\Vert^2
    +\frac{\theta_k(1-\theta_k)\gamma_k}{\gamma_{k+1}}
    \left(\nabla f(\x_k)^T(\v_k-\x_k)\right).\label{eq:cgok3} \\
    & =
    f(\x_k)
    -\frac{1}{2L}\Vert \nabla f(\x_k)\Vert^2\label{eq:cgok4} \\
    & \ge 
    f(\x_k - \nabla f(\x_k)/L)
    \label{eq:cgok5} \\
    & \ge
    f(\x_{k+1}).\label{eq:cgok6}
  \end{align}
  Here, \eqref{eq:cgok1} is a restatement of \eqref{eq:phi_k}
  under the assumption that
  $\bar\x_k=\x_k$.
  Line \eqref{eq:cgok2} follows
  from the induction hypothesis.  Line \eqref{eq:cgok3}
  follows from \eqref{eq:thetaLrel}.

  Line \eqref{eq:cgok4} follows because,
  according to the recursive formula
  \eqref{eq:v_k},
  $\v_k$ lies in the affine space
  $\x_0+\Span\{\nabla f(\x_0), \cdots,\nabla f(\x_{k-1})\}$.
  Similarly, $\x_k$ lies in this space.
  Therefore, $\v_k-\x_k$ lies the $k$th Krylov subspace
  $\Span\{\nabla f(\x_0), \cdots,\nabla f(\x_{k-1})\}$.
  It is well known (see, e.g., \cite{GVL}) that
  $\nabla f(\x_k)$ is orthogonal to every vector in this space.

  Line \eqref{eq:cgok5} is a
  standard inequality for $L$-smooth
  convex functions; 
  see,  e.g., p.~57 of \cite{Nesterov:book}.
  Finally \eqref{eq:cgok6}
  follows
  because $\x_k-\nabla f(\x_k)/L$ lies in the $k+1$-dimensional affine
  space $\x_0+\Span\{\nabla f(\x_0),\ldots, \nabla f(\x_k)\}$
  while $\x_{k+1}$ minimizes $f$ over the same affine space,
  another well known property of CG.
\end{proof}

We note that this proof yields another proof of Daniel's
result that conjugate gradient converges at a linear rate with a factor
$1-\mbox{const}\sqrt{\ell/L}$ per iteration.  Specifically, the two
bounds are
$$f(\x_k)-f(\x^*) \le
\left\{
\begin{array}{l}
  4 \left(\frac{1-\sqrt{\ell/L}}{1+\sqrt{\ell/L}}\right)^{2k}
  (f(\x_0)-f^*) \mbox{ (Daniel)} \\
  \left(1-\sqrt{\ell/L}\right)^k \cdot L\Vert\x_0-\x^*\Vert^2  
  \mbox{ (Nesterov + this proof)}.
\end{array}
\right.$$
We see that the reduction factor
is better in Daniel's bound because of the exponent
$2k$ rather than $k$, which means approximately reduction of
$1-2\sqrt{\ell/L}$ per iteration (assuming $\ell\ll L$)
instead of $1-\sqrt{\ell/L}$.
Furthermore, the multiplicative constant in Daniel's bound is better
as noted earlier.

A limitation of this proof is that it requires $\v_0=\x_0$.  In the
case that the CG iterations follow a sequence of AG iterations, this
equality will not hold.  We return to this point in
Appendix~\ref{sec:restartcg}.

\section{Restarting CG}
\label{sec:restart}
The classical method for coping with failure to make progress
in nonlinear CG is `restarting', which means taking a step using
the steepest descent direction instead of the conjugate
gradient direction.  Our proposed C+AG method, when it detects insufficient
progress on the $k$th iteration 
(i.e., the inequality $f(\x_k)\le \phi_k^*$
fails to hold), attempts two fall-back solutions.  The first
fall-back is restarting CG in the traditional manner (steepest descent).  It should
be noted that this restart does not play a role in the theoretical
analysis of the algorithm, but we found that in practice it
improves performance
(versus immediately proceeding to the second fall-back of
the AG method).

The issue of when to restart has received significant attention
in the previous literature.  One guideline is
to restart every $n+1$ iterations.  In more modern codes,
restarting is done every
$kn+1$ iterations, where $k\ge 1$ is a constant, e.g., Hager-Zhang's
CG-Descent restarts every $6n+1$ iterations.  We also have implemented this
restart in our code, although it is usually not activated because
some other test fails first.

Three other conditions for restarting from the previous literature,
collectively termed the ``Beale-Powell'' conditions by
Dai and Yuan \cite{DY98}, are
\begin{equation}
  |\nabla f(\x_{k-1})^T\nabla f(\x_k)| \ge c_1 \Vert \nabla f(\x_k)\Vert^2,
  \quad \mbox{(BP1)}
  \label{eq:bp1}
\end{equation}
\begin{equation}
  \p_k^T\nabla f(\x_k) \ge -c_2 \Vert\nabla f(\x_k)\Vert^2,
  \quad \mbox{(BP2)}
  \label{eq:bp2}
\end{equation}
\begin{equation}
  \p_k^T\nabla f(\x_k) \le -c_3 \Vert\nabla f(\x_k)\Vert^2,
  \quad \mbox{(BP3)}
  \label{eq:bp3}
\end{equation}
where $c_1\in(0,\infty)$, $c_2\in[0,1]$, and $c_3\in (1,\infty)$.  It should be
noted that in exact arithmetic on a quadratic objective function, none of these
tests will ever be activated.  It should also be noted that
Powell \cite{powell77} as well as
Dai and Yuan consider other restart directions
(which we have not implemented)
besides the
steepest descent direction.

We have found that including these tests (BP1)--(BP3) caused the code
to take more iterations in the majority of tests (although the
difference in iteration counts
was never large), so we have turned them
off by default.  In other words, the default values are $c_1=\infty$,
$c_2=0$ (i.e., we restart if $\p_k$ is not a descent direction),
and $c_3=\infty$.  Presumably this is because there are cases when progress
is possible according to our progress measure even though a BP condition fails.

One additional restart condition in our code is that if the denominator of
\eqref{eq:alphak3} is nonpositive, then we restart because this
means that calculation of $\alpha_k$ has failed.

A recent paper by Buhler et al.\ \cite{buhler2021} considers nonlinear conjugate gradient
for problems in machine learning and in particular the role of (BP) restarts.
The motivation for their work is somewhat different from ours: they
consider general objective functions, not necessarily smooth convex
functions, and therefore accelerated gradient is not available for their
class of problems.  Their first main concern, costly line search, is not an issue
for C+AG since its line search requires only one additional gradient per
iteration.  Their second main concern,
nonconjugacy in search directions,  is also less of a concern for C+AG since
conjugacy is not an ingredient of the C+AG progress measure.
They conclude that (BP1)--(BP3) (and (BP1) in particular)
are important for a successful nonlinear CG
implementation.  The fact that their conclusions
differ from ours arises from the different aims of the papers.

\section{Switching to AG in the case of lack of progress}
\label{sec:switching}

If the inequality $f(\x_k)\le \phi_k^*$ fails to hold also for the
steepest descent
direction described in the previous section, then the second fall-back is
to switch to a sequence of AG steps.

AG steps continue until the following ``almost-quadratic'' test is satisfied.
Let
  $$q_k := -\frac{\nabla f(\bar\x_k)^T(\nabla f(\bar\x_k) + \nabla f(\x_{k+1}))}{2L}.$$
The termination test for AG is
\begin{equation}
  f(\x_{k+1}) \le f(\bar\x_k) + \frac{4}{5}q_k.
  \label{eq:agterm}
\end{equation}
The rationale for this formula is as follows.  One checks with a few lines of
algebra that if $f$ were quadratic, then the identity $f(\x_{k+1})=f(\bar\x_k)+q_k$
holds, assuming that $\x_{k+1}$ is defined as $\bar\x_k-\nabla f(\bar \x_k)/L$ as in
AG (see l.~\ref{stmt:agmain} of Algorithm C+AG, Part II below ).
Thus, \eqref{eq:agterm} tests whether $f$ in
the neighborhood of $\bar\x_k$ is behaving approximately as a quadratic function
would behave, which indicates that conjugate gradient might succeed.

Note that \eqref{eq:agterm} involves an extra function-gradient evaluation
since an ordinary AG iteration evaluates $f,\nabla f$ at $\bar\x_k$ but
not at $\x_{k+1}$. Therefore, \eqref{eq:agterm} is checked only every 8 iterations
to reduce the overhead of the test.

Overall, the C+AG algorithm requires two function-gradient evaluations for a CG
step, one to obtain the gradient at $\x_k$ and the second to
evaluate $\nabla f(\tilde\x)$ in \eqref{eq:alphak2}.  Similarly, the first fall-back requires
two function-gradient evaluations.  Finally, the accelerated gradient
step requires another gradient evaluation.  Therefore, the maximum
number of function-gradient evaluations on a single iteration
before progress is made is
five.  Therefore, Theorem~\ref{thm:nestthm} applies to our algorithm
except for a constant factor of 5.  In practice, on almost every
step, either the CG step succeeds or AG is used, meaning the
actual number of function-gradient evaluations per iteration
lies between 1 and 2.

\section{Estimating $L$}
\label{sec:estL}

The C+AG method as described thus far requires knowledge of $L$ and $\ell$, the
moduli of smoothness and strong convexity respectively.  In case these parameters
are not known, the method estimates $L$ and simply assumes $\ell=0$.  For many
problems (including our examples) $\ell\ll L$,
meaning that convergence is governed by the second parenthesized of
factor of  \eqref{eq:nestrhs}.
Thus, taking $\ell=0$
does not appear to significantly impede convergence.

As for estimating $L$, we use the following procedure
outlined in \S10.4.2 of Beck \cite{Beck}.  Starting from an initial guess
of $L=1$, we first decrease $L$ by multiplying by $1/\sqrt{2}$ or increase by multiplying by $\sqrt{2}$ until the
condition
$$f(\x_0-\nabla f(\x_0)/L) \le f(\x_0) -  \Vert \nabla f(\x_0)\Vert^2/(2L)$$
is satisfied.  This means that C+AG and AG require
$O(|\log(L)|)$ iterations for the initial estimate.
On subsequent iterations, we re-estimate $L$ on the first iteration of
consecutive sequences of CG iterations and on every iteration of AG.
On iterations after the first, $L$ cannot decrease;
it will either stay the same or increase.
Beck proves that using this procedure does not worsen the iteration-complexity
of AG when compared to the case that $L$ is known a priori.  However, this procedure
increases the number of function-gradient evaluations per AG step from one
to approximately two. In the case of the conjugate gradient steps, the impact
is negligible since only the first in a sequence requires the extra function-gradient
evaluation.

It is not
necessary to re-estimate $L$ on every CG iteration because the CG step
uses $L$ only as the finite-difference step size in its procedure to
determine
$\alpha_{k}$ as in
\eqref{eq:alphak1}--\eqref{eq:alphak3}.  This dependence is mild in the sense
that, assuming $f$ is quadratic,  \eqref{eq:alphak1}--\eqref{eq:alphak3}
will compute the same (correct) value of $\alpha_k$
regardless of the step size.  All 
computational tests in Tables~\ref{table:quad} and \ref{tab:results} use
the adaptive version of C+AG
described in this section.

There are procedures in the literature for estimating $\ell$ also, for example, 
the procedure by Carmon et al.\ \cite{Carmon}.  However, Carmon et al.'s
procedure is costly
as it involves several trial steps and a guess-and-update procedure for $\ell$.
Even in the pure quadratic case, estimating $\ell$ apparently
requires multiple steps of the Lanczos method \cite{GVL} and would be costly.
We have not implemented it since, as mentioned above, taking $\ell=0$ appears
to be satisfactory.  

\section{Pseudocode for C+AG}
\label{sec:pseudocode}
In this section we present pseudocode for the
C+AG algorithm.  The pseudocode includes the procedure for
estimating $L$. It does not, however, include the
elaborate procedure for finding witnesses after CG is restarted
as described in Appendix~\ref{sec:restartcg}.  We did not include
this in the pseudocode because it is disabled by default as discussed
in the appendix.
See the concluding remarks of the paper for a downloadable
implementation in Matlab, which does include that
procedure as an option.  The input arguments are $f$ and $\nabla f$,
the objective function and gradient, $\x_0$, the initial guess,
$\texttt{gtol}$, the termination
criterion, and $L$, $\ell$, the smoothness modulus and the modulus of strong
convexity respectively.  Note that $\ell=0$ is a valid argument,
and $L=\mathrm{NaN}$ (not a number) is a signal to the algorithm to
estimate $L$.

\begin{algorithm}
  \caption{ComputeThetaGamma (implementation of \eqref{eq:gammak} and \eqref{eq:thetaeq})}
  \label{alg:thetagamma}
  \begin{algorithmic}
    \Require $L,\ell,\gamma_k$
    \Ensure $\theta_k, \gamma_{k+1}$
    \State Solve $L\theta_k^2+(\gamma_k-\ell)\theta_k-\gamma_k=0$ via
    the quadratic formula for the positive root $\theta_k$.
    \State  Let $\gamma_{k+1}:=(1-\theta_k)\gamma_k+\theta_k\ell$
  \end{algorithmic}
\end{algorithm}

\begin{algorithm}
  \caption{EstimateLInitial}
  \label{alg:estL0}
  \begin{algorithmic}
    \Require $f(\cdot)$, $\x_0$, $f(\x_0)$,
    $\nabla f(\x_0)$, $L_{\rm init}(=1\mbox{ by default})$
    \Ensure $L$
    \State $L:=L_{\rm init}$
    \For {$k = 1 : 100$}
    \If {$f(\x_0 - \nabla f(\x_0) / L) < f(\x_0) - \Vert \nabla f(\x_0)\Vert^2/(2L)$}
    \State $L := L /\sqrt{2}$
    \Else
    \State \textbf{return} \texttt{EstimateL}$(f(\cdot), \x_0, f(\x_0), \nabla f(\x_0), L)$
    \EndIf
    \EndFor
    \State \textbf{Throw error}: ``$f$ may be unbounded below''
  \end{algorithmic}
\end{algorithm}

\begin{algorithm}
  \caption{EstimateL}
  \label{alg:estL}
  \begin{algorithmic}
    \Require $f(\cdot)$, $\x_0$, $f(\x_0)$, $\nabla f(\x_0)$, $L$
    \Ensure $L$
    \For {$k=1:60$}
    \State $f_1:=f(\x_0-\nabla f(\x_0)/L) $
    \If {$f_1 \ge f(\x_0) - \Vert \nabla f(\x_0)\Vert^2/(2L)$
      \textbf{and} $|f_1-f(\x_0)|\ge 10^{-11}|f(\x_0)|$}
    \State $L := \sqrt{2}L$
    \Else
    \State \textbf{return} $L$
    \EndIf
    \EndFor
    \State \textbf{Throw error}: ``Line search failed to determine $L$; possible
    incorrect gradient function or excessive roundoff error''
  \end{algorithmic}
\end{algorithm}

\begin{algorithm}
  \caption{ComputeVPhiStar (implementation of \eqref{eq:v_k} and \eqref{eq:phi_k})}
  \label{alg:vphistar}
  \begin{algorithmic}
    \Require $\theta_k,\gamma_k,\gamma_{k+1},\ell,\v_k,\phi_k^*,\bar\x_k,
    f(\bar\x_k),\nabla f(\bar\x_k)$
    \Ensure $\v_{k+1},\phi_{k+1}^*$
    \State Let $\v_{k+1}:=\frac{1}{\gamma_{k+1}}
           [(1-\theta_k)\gamma_k\v_k+\theta_k\ell\bar\x_k-\theta_k\nabla f(\bar\x_k)].$
    \State
       $\begin{aligned}           
           \mbox{Let }\phi_{k+1}^* := &(1-\theta_k)\phi_k^* + \theta_kf(\bar\x_k)
           -\frac{\theta_k^2}{2\gamma_{k+1}}\Vert\nabla f(\bar\x_k)\Vert^2 \\
           &\>\mbox{}
    +\frac{\theta_k(1-\theta_k)\gamma_k}{\gamma_{k+1}}
    (\ell \Vert\bar\x_k-\v_k\Vert^2/2 + \nabla f(\bar\x_k)^T(\v_k-\bar\x_k)).
    \end{aligned}
    $
  \end{algorithmic}
\end{algorithm}

\begin{algorithm}
  \caption{C+AG (Part I)}
  \label{alg:C+AG}
  \begin{algorithmic}[1]
    \Require $f(\cdot),\nabla f(\cdot), \x_0\in\R^n, \ell, L, \texttt{gtol}$
    \Ensure $\x^*$ such that $\Vert\nabla f(\x^*)\Vert\le \texttt{gtol}$
    \State $f_0:=f(\x_0)$; $\g_0:=\nabla f(\x_0)$; 
    $\phi_0^*:=f_0$; $\v_0:=\x_0$;  $\texttt{onlyAG}:=\mathbf{false}$;
    \State $i_{cg}:=0$; $i_{ag}:=0$; 
    $\p_0:=-\g_0$; $\texttt{LNaNFlag}:=\mathbf{false}$
    \If {$L=\mathrm{NaN}$}
    \State $L:=\texttt{EstimateLInit}(f(\cdot),
    \x_0, f(\x_0), \nabla f(\x_0), 1)$;
    $\ell:=0$;
    $\texttt{LNaNFlag}:=\mathbf{true}$
    \EndIf
    \State $\gamma_0:=L$;
    \For {$k = 0,1,\ldots$}
    \State $\theta_{k},\gamma_{k+1}:=\texttt{ComputeThetaGamma}(L,\ell,\gamma_k)$
    \For {$\texttt{whichsteptype}:=1:3$} \label{stmt:cgattempt}
    \If {$\texttt{whichsteptype}\le 2$}
    \IIF {$\texttt{onlyAG}$}
    \textbf{continue} \ENDIIF
    \If {$i_{cg}\ge 6n+1$\textbf{ or } $\texttt{whichsteptype}=2$}
    \State $\p_{k}:=-\g_k$; $i_{cg}:=0$ 
    \EndIf
    \If {$i_{cg}=0$ \textbf{and} $k>0$ \textbf{and} \texttt{LNaNFlag}}
    \State  $L:=\texttt{EstimateL}(f(\cdot),
    \x_k, f(\x_k), \nabla f(\x_k), L)$
    \EndIf
    \State $i_{cg}:=i_{cg}+1;\> i_{ag}:=0$
    \State $\tilde\x:=\x_k+\p_{k}/L;$ $\tilde f:=f(\tilde\x)$;
    $\tilde\g :=\nabla f(\tilde\x)$
    \IIF {$\Vert \tilde\g\Vert\le \texttt{gtol}$} $\mathbf{return}\>\tilde\x$ \ENDIIF
    \State $\texttt{Ap} := L(\tilde\g-\g_k)$;
    $\texttt{pAp} := \p_{k}^T\texttt{Ap}$;
    \IIF {$\g_k^T\p_k\ge 0 \textbf{ or } \texttt{pAp}\le 0$} \textbf{continue}\ENDIIF
    \State $\alpha_{k}:=-\g_k^T\p_{k}/\texttt{pAp}$ \label{stmt:alpha}
    \State $\x_{k+1} := \x_k+\alpha_{k}\p_{k}$; $f_{k+1}:=f(\x_{k+1})$;
    $\g_{k+1}:=\nabla f(\x_{k+1})$
    \IIF {$\Vert \g_{k+1}\Vert\le \texttt{gtol}$} $\mathbf{return}\>\x_{k+1}$ \ENDIIF
    \State  $\v_{k+1},\phi_{k+1}^*:=
    \texttt{ComputeVPhiStar}(\theta_k,\gamma_k,\gamma_{k+1},
    \ell,\v_k,\phi_k^*,\x_k, f_k,\g_k)$
    \If {$f_{k+1}\le \phi_{k+1}^*$} \label{stmt:suffprog}
    \State  $\hat\y := \g_{k+1} - \g_k$ 
    \State $\beta^1  :=
    \left(\hat\y - \p_{k}\cdot
    \frac{2 \Vert\hat\y\Vert^2}
         {\hat\y^T\p_{k}}\right)^T\frac{\g_{k+1}}
         {\hat\y^T\p_{k}}$
    \State  $\beta^2 := \frac{-1}
            {\Vert\p_{k}\Vert\cdot\min\left(.01\Vert\g_0\Vert,
              \Vert\g_{k+1}\Vert\right)}.$
    \State $\beta_{k+1} := \max(\beta^1,\beta^2)$
    \State $\p_{k+1}:=-\g_{k+1}+\beta_{k+1}\p_{k}$
    \State \textbf{break} \mbox{(terminate \texttt{whichsteptype} loop)}
    \EndIf
    \algstore{cplusag}
  \end{algorithmic}
\end{algorithm}

\begin{algorithm}
  \caption{C+AG Part II}
  \begin{algorithmic}[1]
    \algrestore{cplusag}
    \Else $\>\>(\texttt{whichsteptype}=3)$
    \If {$\mathbf{not}\>\texttt{onlyAG}$} \label{stmt:ag}
    \State $\texttt{onlyAG}:=\mathbf{true}$
    \State $i_{ag} := 0$; $i_{cg} :=0$
    \EndIf
    \State $i_{ag}:=i_{ag}+1$
    \State $\bar{\x}_k :=
    \frac{\theta_k\gamma_k\v_k+\gamma_{k+1}\x_k}
         {\gamma_k+\theta_k\ell}$;  \label{stmt:ag2}
    $\bar f_k:=f(\bar{\x}_k)$;
    $\bar \g_k:=\nabla f(\bar{\x}_k)$
    \IIF {$\Vert \bar\g_k\Vert\le \texttt{gtol}$}
    \textbf{return} $\bar\x_k$ \ENDIIF
    \IIF {\texttt{LNaNFlag}}
     $L:=\texttt{EstimateL}(f(\cdot),
     \x_k, f(\x_k), \nabla f(\x_k), L)$
     \ENDIIF
    \State $\x_{k+1}:=\bar{\x}_k-\bar{\g}_k/L$ \label{stmt:agmain}
    \State  $\v_{k+1},\phi_{k+1}^*:=
    \texttt{ComputeVPhiStar}(\theta_k,\gamma_k,\gamma_{k+1},
    \ell,\v_k,\phi_k^*,\bar\x_k,
    \bar f_k,\bar\g_k)$
    \If {$i_{ag}\equiv 0\> (\mbox{mod $8$})\> \mathbf{and} \>
  f(\x_{k+1}) \le f(\bar\x_k) - \frac{4}{5}\cdot
  \frac{\bar\g_k^T(\bar\g_k + \nabla f(\x_{k+1}))}{2L}$} \label{stmt:nearquadtest}
    \State  $f_{k+1}:=f(\x_{k+1})$; \label{stmt:cgprep}
    $\g_{k+1}:=\nabla f(\x_{k+1})$
    \State $\p_{k+1}:=-\g_{k+1}$
    \State $\texttt{onlyAG}:=\mathbf{false}$
    \EndIf
    \EndIf
    \EndFor
    \EndFor
  \end{algorithmic}
\end{algorithm}

The mathematical structure
of the algorithm has already been described in previous sections, and in this
section we will describe a few computational details.
The flag \texttt{onlyAG} indicates that the algorithm is currently executing
a block of AG statements.
As mentioned in Section~\ref{sec:switching}, once AG iterations
begin, they continue uninterrupted until
\eqref{eq:agterm} holds, and this is checked every 8th iteration.

The variable $i_{cg}$ counts the number of consecutive CG iterations without
a restart.  A restart means that $\p_{k+1}$ is defined to be $-\g_k$, i.e.,
a steepest descent step is taken.  The code forces a restart after $6n+1$ iterations.
Such a test is typical in NCG algorithms.  The theory for $n$-step quadratic convergence
requires a restart every $n+1$ iterations, but well known codes, e.g.,
CG-Descent \cite{HagerZhang} often use a longer interval.

The for-loop on l.~\ref{stmt:cgattempt} tries first a CG step and second
a steepest descent step in order to make progress.  Line \ref{stmt:alpha}
computes $\alpha_{k}$ as discussed in Section~\ref{sec:linesearch}.
The test for sufficient progress in a CG step appears in l.~\ref{stmt:suffprog}.
If sufficient progress is attained, then the next CG search direction
$\p_{k+1}$ is computed.

Line \ref{stmt:ag} can be reached if both attempts at CG fail to make
sufficient progress.  It is also reached if the \texttt{onlyAG} flag is
set to `true', meaning that the algorithm is currently inside a block of consecutive
AG iterations.  The statements beginning with \ref{stmt:ag2} are the formulation
of the AG method taken from \cite{Nesterov:book}.
Statement l.~\ref{stmt:nearquadtest} implements the test for resuming CG
described in Section~\ref{sec:switching}.

\section{Computational experiments}
\label{sec:comp}

In this section we report on computational experiments with C+AG. We compared
it to two other codes: AG coded by us based on (2.2.8) of \cite{Nesterov:book}
and CG-Descent by Hager and Zhang.  We used C version 6.8 of CG-Descent
with the Matlab front-end.  We used it in memoryless mode, i.e., NCG rather
than L-BFGS.

As for our own code C+AG, we discovered via experimentation that the
more elaborate calculation of $\bar\x_k$ described in Appendix~\ref{sec:restartcg}
diminished the performance of the code.
In particular, as mentioned
the additional overhead of 100\% per iteration
slowed the code down without improving the iteration count and thus was not used.

To the best of our knowledge, there is no standard benchmark set of
smooth convex functions with a known $L,\ell$, so instead we constructed
our own test set of eight problem instances based on formulations
that have occurred in the recent literature in data science.

The convergence criterion is $\Vert \nabla f(\x_k)\Vert\le \texttt{gtol}$,
where $\texttt{gtol}=10^{-8}$ for the ABPDN and LL problems described below,
while $\texttt{gtol}=10^{-6}$  for the HR problems.
(None of the algorithms
were able to converge to $10^{-8}$ for the HR problems in fewer
than $2\cdot 10^6$ iterations, so we loosened the criterion for that problem.)
In the case of CG-Descent, the convergence criterion is based on the
$\infty$-norm rather than 2-norm.  (The $\infty$-norm convergence criterion is hardwired
into the code, and we did not attempt to rewrite this fairly complex
software.)
Via some experiments not reported
here, we found that stopping when
$\Vert \nabla f(\x_k)\Vert_\infty\le 3\cdot \texttt{gtol}/
\sqrt{n}$ approximately reproduced the criterion
$\Vert \nabla f(\x_k)\Vert\le \texttt{gtol}$ used for the other methods, so this
was the termination criterion for CG-Descent.

The reported results for our C+AG routine used $L=\mathbf{NaN}$ and $\ell=0$
in all cases, in other words, adaptive estimation of $L$.  We assume most
users would select this mode since $L,\ell$ are often not known in advance.
For AG, we tested both the known and unknown $L,\ell$ modes; this is indicated in the
table below by ``AG'' and ``AG/EstL.''

As mentioned in the introduction, we have reported on function-gradient evaluation
counts since that is usually taken as the primary work-unit of first-order methods.
We did not report on wall-clock time because CG-Descent, having been written in
highly optimized C, has an advantage over AG and C+AG, which are written Matlab.

The first four problems are
based on BPDN (basis pursuit denoising), that is, the
unconstrained convex optimization
problem:
$$\min \frac{1}{2}\Vert A\x-\b\Vert^2 +\lambda\Vert\x\Vert_1$$
in which $\lambda > 0$ and $A\in\R^{m\times n}$ 
has fewer rows than columns, so that the problem
is neither strongly convex nor smooth.  However, the following
approximation (called APBDN)
is both smooth and strongly convex on any bounded domain:
$$\min \frac{1}{2}\Vert A\x-\b\Vert^2 +\lambda\sum_{i=1}^n\sqrt{x_i^2+\delta}$$
where $\delta>0$ is a fixed scalar.  It is easy to see that as
$\delta\rightarrow 0$, the original problem is recovered.  As
$\delta\rightarrow 0$, $\ell\rightarrow 0$ and $L\rightarrow\infty$,
where $\ell,L$ are the moduli of strong and smooth convexity respectively.

In our tests of ABPDN 
we took $A$ to be a subset of $\sqrt{n}$ rows
of the discrete-cosine transform matrix of size $n\times n$, where
$n$ is an even power of 2.  (This matrix and its transpose, although
dense, can be applied in $O(n\ln n)$ operations.)  The subset of
rows was selected to be those numbered by the first $m=\sqrt{n}$ prime
integers
in order to get reproducible
pseudorandomness in the choices.  Similarly, in 
order to obtain a pseudorandom $\b$, we selected $\b\in\R^m$ 
according to the formula $b_i=\sin(i^2)$.  The value of
$\lambda$ was fixed at $10^{-3}$ in all tests.
Finally,
we varied $\delta=10^{-4}$ and $\delta=5\cdot 10^{-6}$ and we tried both
$n=65536$ and $n=262144$ for a total of four test cases.

The $\ell_1$-penalty term in the original BPDN formulation leads to
many entries of the solution $\x^*$ equal to 0.  This is no longer the
case for ABPDN since the nondifferentiability at $x_i=0$ has
been smoothed out.  However,
a simple scaling argument suggests that the threshold
for ``nearly zero'' should be magnitude bounded above by
$\sqrt{\delta}$.  We determined
that our computed solutions for the four ABPDN solutions had between
78\% to 82\% of entries nearly zero.

The second test case is logistic loss (LL), which is as follows:
$$f(\x)=R(A\x)+\lambda\Vert\x\Vert^2/2,$$
where
$A$ is a given $m\times n$
matrix,
and $\lambda>0$ is a regularization parameter.  Here,
$R(\v)=\sum_{i=1}^mr(v_i)$ where  $r(v)=\ln(1+e^{-v})$.
Row $i$ of $A$, $i=1:n$, is of the form $\e^T/\sqrt{n}+\z_i$,
where $\e\in\R^n$ is the vector of all 1's and
$\z_i\sim N(\bz, \sigma^2I)$ where $\sigma=0.4$.  We seeded
Matlab's random number generator with the same seed on each run
for reproducibility.
This formulation arises from the problem of identifying the
the best halfspace that contains noisy data points.
We ran this test with $A\in\R^{6000\times 3000}$ with
two
values of $\lambda$, namely, $\lambda=10^{-4}$ and
$\lambda = 5\cdot 10^{-6}$.  This function is smooth and strongly
convex.  The function $\lambda$ controls the strong convexity.

The third test case is Huber regression (HR).
Given a matrix $A\in\R^{m\times n}$
and a vector $\b\in\R^m$, Huber regression
\cite{huber} has been proposed as
a means to make linear least-squares regression more robust against
outliers.
In more detail, a cutoff $\tau>0$ is selected in order to define the
function
$$\zeta(t)=\left\{
\begin{array}{ll}
  -\tau^2-2\tau t, & t\le -\tau, \\
  t^2, & t\in[-\tau,\tau], \\
  -\tau^2+2\tau t, & t\ge \tau.
\end{array}
\right.$$
Note that $\zeta(t)$ is differentiable and convex
and behaves like $t^2$ for small $|t|$ and like $O(|t|)$ for large
$|t|$.
Finally, $f(\x)=\sum_{i=1}^m\zeta(A(i,:)\x-b_i).$
This function is smooth but not strongly convex, i.e., $\ell=0$.
We chose the $(n+1)\times n$ sparse matrix with $1$'s on the main
diagonal and $-1$'s on the first sub diagonal.  The vector
$\b$ was taken to be all 1's except for the final entry, which
is $-1.1n$.  (This $\b$ is close to
$A\x$ where $\x=[1,2,\ldots,n]^T$; in other words, $\b$ is close to a right-hand
side that has a 0-residual solution.)
We fixed $n=10000$.
We selected two choices for $\tau$, namely, $\tau=250$ and $\tau=1000$.

Our results for the four algorithms on the eight problems are shown
in Table~\ref{tab:results}.  We set a limit of $10^6$ function-gradient
evaluations for C+AG and AG.

\begin{table}
  \begin{center}
  \caption{Function-gradient evaluation counts for four algorithms
    in columns 3--6 on eight problems.
    Bold indicates the best for each row.  The second
    column of the table indicates the percentage of C+AG iterations spent in
    AG iterations.}
  \label{tab:results}
  \begin{tabular}{|l|r|r|r|r|r|}
    \hline
    Problem & {\%}AG & C+AG & AG & AG/EstL & CG-Descent \\
    \hline
    ABPDN$(n=65536,\delta=10^{-4}) $ & .03\% & \textbf{55,891} & 518,019 & 982,919 &
    82,472\\
    ABPDN$(n=65536,\delta=5\cdot 10^{-6})$ & 14\% &
    226,141 & 660,355 & $>10^6$  & \textbf{165,207} \\
    ABPDN$(n=262144, \delta = 10^{-4})$  & .02\% &
    \textbf{80,335} & 901,418 &
    $>10^6$ & 130,040 \\
    ABPDN$(n=262144, \delta = 5\cdot 10^{-6}) $  & 0\%  &
    \textbf{483,420} & $>10^6$ & $>10^6$ & 532,706  \\
    LL$(\lambda=10^{-4})$ &  0\% &
    148 & 106,507 & $>10^6$
    & \textbf{128} \\
    LL$(\lambda=5\cdot 10^{-6})$ & 0\%&
    140 & 362,236 & $>10^6$  & \textbf{125} \\
    HR$(\tau=250)$ &   64\% &
    \textbf{160,115} &  $> 10^6$ & $>10^6$ & 946,488 \\
    HR$(\tau=1000)$ & 60\% & 
    \textbf{95,416} & $>10^6$ & $>10^6$ & 245,376 \\
    \hline
  \end{tabular}
  \end{center}
\end{table}

The results in Table~\ref{tab:results} show that
C+AG outperformed both AG and CG-Descent in most cases.
In all cases, C+AG did as about as well as or better than whichever of
AG or CG-Descent performed better.
    
\section{Conclusions}
\label{sec:conc}

We have presented an algorithm called C+AG, which is a variant of
nonlinear conjugate gradient tailored to smooth, convex objective functions.
It has a guaranteed rate of convergence equal to that of accelerated gradient,
which is optimal for the class of problems under consideration in the
function-gradient evaluation model of computation. The method reduces
to linear conjugate gradient in the case that the objective function is
quadratic.

The code and all the test cases are implemented in Matlab and available on
Github under the project named `ConjugatePlusAcceleratedGradient'.

The C+AG code outperformed both AG and CG on most test cases tried.
One interesting future direction is as follows.  The method switches between
CG and AG steps depending on the progress measure.  A more elegant approach
would be a formulation that interpolates continuously between the two
steps.  Indeed, it is possible (see, e.g., \cite{KarimiVavasis-preprint}) to
write down a single parameterized update step in which one parameter
value corresponds to AG and another to CG.  But we do not know of a
systematic way to select this parameter that leads to a complexity bound.

Another direction to pursue is incorporating restarts into AG.  In particular,
\cite{OdonoghueCandes} propose a restart method that appears to work well
in practice according to their experiments.  Both AG and C+AG could use their
restarts, although presumably the former would benefit more than the latter
since AG steps are not used as frequently in C+AG compared to pure AG.
However, to the best of our
knowledge, the restart method is not guaranteed in the sense: It has not
been proved that the bounds on AG iteration of Theorem~\ref{thm:nestthm} are
still valid in the presence of restarts.  Since our goal here was to provide
a method with the iteration guarantee of AG, we did not incorporate
their restarts.

Yet another extension would be to use memory of previous iterates.  In the
context of NCG, adding memory yields L-BFGS \cite{NocedalWright}.  It is
clearly possible to hybridize L-BFGS with AG, and it is likely that the
progress measure proposed still holds for L-BFGS in the case that it is
applied to a quadratic since L-BFGS reduces to LCG in this case.  Note
that some authors, e.g.\ \cite{ZhangODonoghueBoyd},
have also proposed adding memory to AG-like methods, so many combinations
are possible.

Another interesting direction is to develop a conjugate gradient iteration
suitable for constrained convex problems.  Conjugate gradient is known to extend
to the special case of minimizing a convex quadratic function with a Euclidean-ball
constraint;
see e.g. Gould et al.\ \cite{doi:10.1137/S1052623497322735}
and further remarks on the complexity
in Paquette and Vavasis \cite{PaquetteVavasis}.

\section{Acknowledgements}
The authors are grateful to the referees of the previous version of
this paper for their helpful comments.

\appendix

\section{Restarting CG after AG}
\label{sec:restartcg}

As mentioned in Section~\ref{sec:switching}, if the sufficient-progress test fails
for a CG iteration, then the method can fall back to AG iterations.
After the AG termination test holds,
the method resumes CG, say at iteration
$m$.

An issue with restarting CG is that the sufficient-progress
test may not hold for the CG iterations even if the function is purely quadratic
on the remaining iterations.  This is because the proof of sufficient
progress
in Theorem~\ref{thm:quad_suffprog} assumed that $\v_0=\x_0$.  However,
we would not expect the equality
$\x_m=\v_m$ to hold, where $m$ is the first iteration of
CG, and therefore Theorem~\ref{thm:quad_suffprog} does not apply.

We can address this issue with a more elaborate CG sufficient-progress test
after resuming CG.  The more elaborate test increases the number of
function-gradient evaluations per CG iteration from two to four.
This extra overhead, according to our experiments, overwhelmed any
benefit from the more elaborate procedure.

We did observe the benefit of the elaborate procedure in a contrived test
case.  In this contrived test, the objective function $f$ is quadratic, and
the C+AG code is modified in two ways.  First, the initial block of iterations
is AG instead of CG, i.e., the \verb+only_ag+ flag is set to \verb+true+ at
initialization.  Second, instead of setting $\v_0:=\x_0$, we add random noise
to the right-hand side.  For this contrived test, indeed we observed for some
choices of the random noise that CG was never able to restart unless the elaborate
progress measure computation was enabled, and therefore the number of function-gradient
evaluations associated with the elaborate procedure was much smaller than
without it.  However, we did not observe this behavior in any
naturally occurring test case.

Therefore, our default setting is
not to use the elaborate test.
This means that with our default settings,
there is no theoretical guarantee that CG iterations 
can continue indefinitely upon their resumption after AG
even if the objective function is purely quadratic on
the level set of $\x_m$
when CG resumes.  However, the other theoretical guarantees of our method
(that it has the same complexity as accelerated gradient and that it reduces to
CG on a quadratic function) remain.

Despite the fact that the test is turned off by default,
it is available in the code and may prove useful,
and therefore we describe it here for the sake of completeness.
Also, the proof-sketch of $n$-step quadratic convergence
in the next appendix requires that this procedure be used.

Above, we defined $m$ to
be the iteration at which CG iterations are resumed.  To simplify
notation in the remainder of the section, assume $m=0$.  Thus, we consider CG
starting with $\v_0\ne \x_0$.
As in Theorem~\ref{thm:quad_suffprog}, we assume the objective
function $f(\x)$ is quadratic, say $f(\x)=\x^TA\x/2-\b^T\x$.
Recall that linear CG generates a sequence of vectors
$\p_0,\p_1,\ldots$ (see, e.g., \cite{NocedalWright}, \S5.1) such
that $\p_i^TA\p_j=0$ whenever $i\ne j$ (called ``conjugacy'') and
such that for all $k$, $\Span\{\p_0,\ldots,\p_{k-1}\}=
\Span\{\p_0,A\p_0,\ldots,A^{k-1}\p_0\}=\Span\{\nabla f(\x_0),\ldots,\nabla f(\x_{k-1})\}$,
called the $k$th Krylov space and denoted $K_k$ for $k=1,2,\ldots$ (where
$K_0:=\{\bz\}$).  Other properties are that
$\x_k$ is the minimizer of $f$ over the affine
set $\x_0+K_{k}$ and the equation $\x_{k+1}:=\x_k+\alpha_k\p_k$.

The new progress-checking
procedure augments the Krylov space with two additional directions
on each iteration
to assure that the analogs of \eqref{eq:cgok4} and \eqref{eq:cgok6},
that is, \eqref{eq:cgok4a} and \eqref{eq:cgok6a} below,
hold for the method.  In more detail, it
computes additional sequences of vectors
$\w_0,\w_1,\w_2,\ldots$;
$\z_1,\z_2,\z_3,\ldots$;
and $\bar\x_0,\bar\x_1,\bar\x_2\ldots$;
and scalars
$\bar\alpha_0,\bar\alpha_1,\bar\alpha_2,\ldots$;
$\bar\beta_1,\bar\beta_2,\bar\beta_3,\ldots$;
%$\lambda_0,\lambda_1,\lambda_2,\ldots$;
and
$\mu_1,\mu_2,\mu_3,\ldots$ whose formulas are forthcoming.
Let $\bar K_0:=\Span\{\w_0\}$ and for $k\ge 1$,
$\bar K_{k}:=\Span\{\p_0,\ldots,\p_{k-1},\w_k,\z_k\}=K_{k}\oplus\Span\{\w_k,\z_k\}$.
For each $k\ge 1$, define
\begin{equation}
  \u_k := \bar\x_{k-1}-\nabla f(\bar\x_{k-1})/L-\x_k,
  \label{eq:ukdef}
\end{equation}
which is not computed by the method but used in its derivation.
The construction of these vectors and scalars inductively has the following properties.
\begin{enumerate}
\item
  For all $k\ge 0$, $\v_k-\x_k\in\bar K_k$. Furthermore, when writing
  $\v_k-\x_k$ as a linear combination of $\p_0,\ldots,\p_{k-1},\w_k,\z_k$,
  the coefficients of $\w_k,\z_k$ are $1,0$  respectively.
  For this property and the next, in the degenerate case that
  $\p_0,\ldots,\p_{k-1},\w_k,\z_k$ are dependent, this property
  is understood to mean that there exists a way to write $\v_k-\x_k$
  with coefficients $1,0$.
\item
  For all $k\ge 1$,
  $\u_k \in\bar K_k$. Furthermore, when writing
  $\u_k$ as a linear combination of $\p_0,\ldots,\p_{k-1},\w_k,\z_k$,
  the coefficients of $\w_k,\z_k$ are $\mu_k,1$ respectively.
\item
  For $k\ge 1$, the vectors $\p_0,\ldots,\p_{k-1},\w_k,\z_k$ of $\bar K_k$ are conjugate
  with respect to $A$.
\item
  For $k=0$, define $\bar\x_0:=\x_0-\bar\alpha_0\w_0$, and for $k\ge 1$, define
  $\bar\x_k:=\x_k-\bar\alpha_k\w_k-\bar\beta_k\z_k$.  
  Observe that $\bar\x_k-\x_k$ lies in $\bar K_k$.
  Then for all $k\ge 0$, $\bar\x_k$ is the minimizer of $f$ over $\x_k+\bar K_k$.
\item
  For all $k\ge 0  $,
  vector $\v_{k+1}$ is computed (as usual) from $\v_{k}$ according to \eqref{eq:v_k}.
\end{enumerate}

To initiate this sequence of properties, let
\begin{equation}
\w_0:=\v_0-\x_0.  \label{eq:w0} 
\end{equation}
This formula implies Property 1 holds.  Properties 2--3 do not apply when $k=0$.
Assure Property 4 by solving a 1-dimensional minimization problem
for $\bar\alpha_0$, that is,
\begin{equation}
  \bar\alpha_0:=\argmin_{\alpha}f(\x_0-\alpha\w_0)
  \label{eq:a0b0}
\end{equation}
Finally, Property 5
specifies how $\v_1$ is computed.

Next, for the induction step, assume all the above properties
hold for $k$.  Recall that \eqref{eq:v_k} is used to
obtain $\v_{k+1}$.  Observe from \eqref{eq:v_k}
that $\v_{k+1}$ is written
as a linear combination
\begin{equation}
  \v_{k+1}:=\sigma\v_{k}+(1-\sigma)\bar\x_{k} - \tau\nabla f(\bar\x_{k}),
  \label{eq:v_k2}
\end{equation}
where the scalars $\sigma,\tau$ may be read off from \eqref{eq:v_k}.
Note: we omit the subscripts $k$ on $\sigma$, $\tau$ and some
other scalars in this discussion for clarity, since $k$ is fixed.
In the upcoming formulas, if $k=0$, take $\bar\beta_0$, $\p_{-1}$, and $\z_0$ to be zero.
Let us define
\begin{align}
  \w_{k+1}&:=(\sigma-(1-\sigma)\bar\alpha_k)\w_k-
  (1-\sigma)\bar\beta_k\z_k
  +\tau\bar\alpha_kA\w_k
  +\tau\bar\beta_kA\z_k+\delta_1\p_{k-1}+\delta_2\p_k, \label{eq:wk1def} \\
  \z_{k+1}&:=-\bar\alpha_k\w_k-\bar\beta_k\z_k+\bar\alpha_kA\w_k/L+\bar\beta_kA\z_k/L
  +\eps_1\p_{k-1}+\eps_2\p_{k}-\mu_{k+1}\w_{k+1},
  \label{eq:zk1def}
\end{align}
where $\delta_1,\delta_2,\eps_1,\eps_2,\mu_{k+1}$
are yet to be determined.
We now establish
that these formulas satisfy the above properties.

Starting with Property 1, we have
\begin{align}
  \v_{k+1}-\x_{k+1} &=\v_k-\x_k + (1-\sigma)(\bar\x_k-\v_k)
  -\tau\nabla f(\bar\x_k)-\alpha_k\p_k \label{eq:vx1} \\
  &= \sigma(\v_k-\x_k) + (1-\sigma)(\bar\x_k-\x_k)-\tau(A\bar\x_k-\b)-\alpha_k\p_k \label{eq:vx2}\\
  &=\sigma(\v_k-\x_k) + (1-\sigma)(\bar\x_k-\x_k)-\tau(A\x_k-\b)
  -  \tau A(\bar\x_k-\x_k) -
  \alpha_k\p_k. 
  \label{eq:vx3}
\end{align}
Here, \eqref{eq:vx1} follows by substituting \eqref{eq:v_k2} for $\v_{k+1}$
and $\x_{k+1}:=\x_k+\alpha_k\p_k$ for $\x_k$;
\eqref{eq:vx2} follows from rearranging
and from
substituting the definition of $f$;
\eqref{eq:vx3} adds and subtracts $\tau A\x_k$.

Consider the terms of \eqref{eq:vx3} separately.  The first term $\sigma(\v_k-\x_k)$
lies in $\bar K_k$ with coefficients of $\sigma,0$ on $\w_k,\z_k$
respectively by Property 1 inductively.  The second term
$(1-\sigma)(\bar\x_k-\x_k)$
lies in $\bar K_k$ with coefficients $-(1-\sigma)\bar\alpha_k,-(1-\sigma)\bar\beta_k$
on $\w_k,\z_k$ respectively by Property 4 inductively.
The third term $-\tau(A\x_k-\b)$
lies in $\Span\{\p_0,\ldots,\p_k\}$
by the usual properties of CG.  The fourth term 
$-\tau A(\bar\x_k-\x_k)$ lies in $A\Span\{\p_0,\ldots,\p_{k-1},\w_k,\z_k\}$=
$\Span\{\p_1,\ldots,\p_k,A\w_k,A\z_k\}$.  In this expansion, the coefficients
of $A\w_k$ and $A\z_k$ are $\tau\bar\alpha_k,\tau\bar\beta_k$ respectively.
Finally, the last term obviously lies in
$\Span\{\p_k\}$.
Therefore, if we
define $\w_{k+1}$ according to \eqref{eq:wk1def}, then Property 1 holds
for $k+1$.

Next we turn to Property 2.  Observe that
\begin{align}
  \u_{k+1}&=\bar\x_k-\nabla f(\bar\x_k)/L -\x_{k+1} \label{eq:uk1a} \\
  &=(\bar\x_k-\x_k)-\alpha_k\p_k-(A\x_k-\b)/L - A(\bar\x_k-\x_k)/L. \label{eq:uk1b}
\end{align}
Here, \eqref{eq:uk1a} follows from the definition \eqref{eq:ukdef}
while \eqref{eq:uk1b} follows from the substitutions
$\nabla f(\bar\x_k)=A\bar\x_k-\b$ and $\x_{k+1}=\x_k+\alpha_k\p_k$ and
adding and subtracting $A\x_k/L$.

Considering \eqref{eq:uk1b} term by term,
the first term $(\bar\x_k-\x_k)$ lies in $\bar K_k$ with coefficients
$-\bar\alpha_k,-\bar\beta_k$ on $\w_k,\z_k$ inductively by Property 4.
The second term lies in $\bar K_{k+1}$.
The third term $-(A\x_k-\b)/L$ lies in $\Span\{\p_0,\ldots,\p_k\}$.
As in the preceding derivation, the fourth term $- A(\bar\x_k-\x_k)/L$
lies in $\Span\{\p_0,\ldots,\p_k,A\w_k,A\z_k\}$ with coefficients
$\bar\alpha_k/L,\bar\beta_k/L$ on $A\w_k,A\z_k$ respectively.
Again, we see now that \eqref{eq:zk1def} is chosen precisely so
that the induction carries through on Property 2.

Now we turn to Property 3.
By the induction hypothesis on Property 3,
$\w_k,\z_k$ are conjugate to each other and to $\p_0,\ldots,\p_{k-1}$.  This implies
$A\w_k,A\z_k$ are conjugate to $\p_0\ldots,\p_{k-2}$ since for $j=0,\ldots,k-2$,
$\p_j^TA(A\w_k)=(A\p_j)^TA\w_k$, and $A\p_j\in \Span\{\p_0,\ldots,\p_{k-1}\}$, and
similarly for $A\z_k$.
Thus, we see term-by-term that every term in the definitions of $\w_{k+1},\z_{k+1}$
in \eqref{eq:wk1def}--\eqref{eq:zk1def}
is already conjugate to $\p_0,\ldots,\p_{k-2}$.  Thus,
to ensure $\w_{k+1}$ is also conjugate to $\p_{k-1}$ and $\p_{k}$, solve
a system of two linear equations for the scalars $\delta_1,\delta_2$
that appear in \eqref{eq:wk1def}.  (In fact, by conjugacy,
the system decouples into two univariate linear equations that are solved
via single division operations.)  Similarly, to assure $\z_{k+1}$ is
conjugate to $\p_{k-1},\p_k,\w_{k+1}$, we solve three linear equations for
$\eps_1,\eps_2,\mu_{k+1}$ from
\eqref{eq:zk1def}, which again decouple into three separate equations.

Next, for Property 4, recall again from the theory of
CG that minimizing $f(\x)$ over $\x_{k+1}+\bar K_{k+1}$  for which we have a conjugate
basis of $\bar K_k$ reduces to $k+1$ uncoupled 1-dimensional minimizations; indeed,
this is the main purpose of conjugacy.  However, $\x_{k+1}$ is already minimal with
respect to $\p_0,\ldots,\p_k$ by the usual properties of CG, so it
is necessary to minimize only with respect to $\w_{k+1},\z_{k+1}$.  Therefore, to
obtain $\bar\x_{k+1}$, one defines
\begin{equation}
  \bar\x_{k+1}:=\x_{k+1}-\bar\alpha_{k+1}\w_{k+1}-\bar\beta_{k+1}\z_{k+1},
  \label{eq:xk1}
\end{equation}
where
\begin{equation}
  (\bar\alpha_{k+1},\bar\beta_{k+1}):=\argmin_{\alpha,\beta} f(\x_{k+1}-\alpha\w_{k+1}-\beta\z_{k+1}),
  \label{eq:alphabetak1}
\end{equation}
a problem that decouples into two simple equations.

Thus, we have established the following result:
\begin{theorem}
  Assume the sequences $\bar\alpha_k$'s,
  $\bar\beta_k$'s, $\mu_k$'s,
  $\v_k$'s, $\w_k$'s,
  $\z_k$'s, and $\bar\x_k$'s are defined by \eqref{eq:v_k} and
  \eqref{eq:w0}--\eqref{eq:alphabetak1}.  Then Properties $1$--$5$ above hold.
\end{theorem}

Now, finally, we can propose a progress measure assured to hold for quadratic
functions, namely, $f(\bar\x_k)\le \phi^*_k$.  The proof that this holds
is a straightforward rewriting of
Theorem~\ref{thm:quad_suffprog}.

\begin{theorem}
  If $f(\x)=\frac{1}{2}\x^TA\x-\b^T\x$, where $A$ is positive definite,
  then $f(\bar\x_k)\le \phi_k^*$ for every iteration
  $k$, where $\x_k$ is the sequence of conjugate gradient iterates
  and $\bar\x_k$ is defined by \eqref{eq:xk1}.
\end{theorem}

\begin{proof}
  The result clearly holds when $k=0$ by \eqref{eq:phi0} and the fact
  that $f(\bar\x_0)\le f(\x_0)$ by the minimality of $\bar\x_0$ in
  the line $\x_0+\Span\{\w_0\}$.
  Assuming $f(\bar\x_k)\le \phi_k^*$, we now show 
  that $f(\bar\x_{k+1})\le \phi_{k+1}^*$.
  For $k\ge 0$,
  \begin{align}
    \phi_{k+1}^*
    &=
    (1-\theta_k)\phi_k^*+\theta_k f(\bar\x_k)
    -\frac{\theta_k^2}{2\gamma_{k+1}}\Vert \nabla f(\bar\x_k)\Vert^2 \notag\\
    &\hphantom{}\quad\mbox{}
    +\frac{\theta_k(1-\theta_k)\gamma_k}{\gamma_{k+1}}
    \left(\ell \Vert \bar\x_k-\v_k\Vert^2+
    \nabla f(\bar \x_k)^T(\v_k-\bar\x_k)\right)\label{eq:cgok1a} \\
    &\ge
    f(\bar\x_k)
    -\frac{\theta_k^2}{2\gamma_{k+1}}\Vert \nabla f(\bar \x_k)\Vert^2
    +\frac{\theta_k(1-\theta_k)\gamma_k}{\gamma_{k+1}}
    \left(\nabla f(\bar\x_k)^T(\v_k-\bar\x_k)\right) \label{eq:cgok2a}\\
    & = 
    f(\bar\x_k)
    -\frac{1}{2L}\Vert \nabla f(\bar\x_k)\Vert^2
    +\frac{\theta_k(1-\theta_k)\gamma_k}{\gamma_{k+1}}
    \left(\nabla f(\bar\x_k)^T(\v_k-\bar\x_k)\right)\label{eq:cgok3a} \\
    & =
    f(\bar\x_k)
    -\frac{1}{2L}\Vert \nabla f(\bar\x_k)\Vert^2\label{eq:cgok4a} \\
    & \ge 
    f(\bar\x_k - \nabla f(\bar\x_k)/L)
    \label{eq:cgok5a} \\
    & \ge
    f(\bar\x_{k+1}).\label{eq:cgok6a}
  \end{align}
  Here, \eqref{eq:cgok1a} is a restatement of \eqref{eq:phi_k}.
  Line \eqref{eq:cgok2a} follows
  from the induction hypothesis.  Line \eqref{eq:cgok3a}
  follows from \eqref{eq:thetaLrel}.

  Line \eqref{eq:cgok4a} follows because,
  according to the recursive formula
  \eqref{eq:v_k},
  $\v_k$ lies in the affine space
  $\x_k+\bar K_k$.
  Also, $\bar\x_k$ lies in this affine space and is optimal
  for $f$ over this space.  Therefore, $\nabla f(\bar\x_k)$
  is orthogonal to every vector in $\bar K_k$ and in particular to $\v_k-\bar\x_k$.

  Line \eqref{eq:cgok5a} is the
  standard inequality for $L$-smooth
  convex functions.
  Finally \eqref{eq:cgok6a}
  follows
  because $\x_k-\nabla f(\x_k)/L=\x_{k+1}+\u_{k+1}$ by \eqref{eq:ukdef}
  and $\u_{k+1}\in \bar K_{k+1}$ by Property 2.  On the other hand
  $\bar \x_{k+1}$ minimizes $f$ over $\x_{k+1}+\bar K_{k+1}$.
\end{proof}

This concludes the discussion of how to compute $\bar\x_{k+1}$ in the case
that $f$ is quadratic.  In the general case, we do not have a matrix $A$
but only $f$ and $\nabla f$.  Therefore, we propose a method to evaluate
the quantities in the preceding paragraphs in the general case.
Assume inductively that we have $\w_k,\z_k,\hat\w_k,\hat\z_k$,
where $\hat\w_k$ corresponds to $A\w_k$ in the above equations
and similarly $\hat\z_k$ corresponds to $A\z_k$.
Then all the formulas above may be evaluated using these vectors.
Note that we already have vectors corresponding to $A\p_{k-1}$
and $A\p_k$; these are found in \eqref{eq:alphak1}--\eqref{eq:alphak3}.
We need these vectors to solve for the scalars $\delta_1,\delta_2,\eps_1,\eps_2$
assuring conjugacy.

To continue the induction, it is necessary to
evaluate $\hat\w_{k+1},\hat\z_{k+1}$ corresponding to
$A\w_{k+1},A\z_{k+1}$.  Multiplying both sides of \eqref{eq:wk1def} and \eqref{eq:zk1def}
by the fictitious $A$ shows that we need to obtain the analog of the product
$A(\bar\alpha_k A\w_{k} + \bar\beta_{k} A\z_{k})$. Note that this combination
of $A\w_{k},A\z_{k}$ appears in both right-hand sides of
\eqref{eq:wk1def} and \eqref{eq:zk1def}.
This may be found by evaluating
$\nabla f(\x_{k+1}+\bar\alpha_k \hat\w_k+\bar\beta_k\hat\z_k)$
and then subtracting $\nabla f(\x_{k+1})$ (which is
already evaluated).

We also need to evaluate
$f(\bar\x_{k+1})$ and $\nabla f(\bar\x_{k+1})$
in order to obtain \eqref{eq:ukdef} and check the progress measure
$f(\bar\x_{k+1})\le \phi_{k+1}^*$.  
Thus, the cost of carrying
out the method of this section is two extra function-gradient evaluations per
iteration.  As mentioned earlier, our computational experiments indicate that
this 100\% overhead is not compensated for by any reduction in iteration count
in any test case,
and therefore the method described in this section is disabled by default.

\section{Toward $n$-step quadratic convergence}
\label{sec:nstep}

In previous sections, we showed the proposed C+AG method has the
same complexity bound up to a constant factor as accelerated gradient.
In this section we suggest that it has an additional complexity bound
known to apply to nonlinear conjugate gradient, namely, $n$-step
quadratic convergence.  This property is mainly of theoretical interest
since $n$-step quadratic convergence is not usually observed in
practice (see, e.g., \cite[p.~124]{NocedalWright}) for larger problems.  Therefore,
we will provide only a sketch of a possible proof and leave the
proof, which is likely to be complicated, to future research.

We propose to
follow the proof given by McCormick and Ritter \cite{McCormickRitter}
rather than the original proof by Cohen \cite{Cohen}.  They argue that
classical NCG is $n$-step quadratically convergent provided that
the line-search is sufficiently accurate and provided that
the objective function is strongly convex in the neighborhood of the
objective function and that its second derivative is Lipschitz continuous.
Finally, they assume that nonlinear CG is restarted every $n$ iterations.

In order to apply this result to C+AG, we first need to argue that the
sufficient-progress test will not prevent CG iterations.  We showed
in the last section that for quadratic functions, the sufficient-progress
test always holds.
A strongly convex function with a Lipschitz second derivative is
approximated arbitrarily closely by a quadratic function in a sufficiently
small neighborhood of the root.  In particular, if we assume that
$L$ is strictly larger than the largest eigenvalue of the Hessian in
a neighborhood of the root, then inequality
\eqref{eq:cgok5a} is strictly satisfied, and small deviations from
a quadratic function are dominated by the residual
in \eqref{eq:cgok5a}.

Some other issues with \cite{McCormickRitter} is that their version of NCG implements
a step-limiter operation that is not present in our algorithm.  It is not
clear whether the step-limiter is a requirement for $n$-step quadratic convergence
or merely a device to simplify their analysis.  Furthermore, \cite{McCormickRitter}
assume the Polak-Ribi\`ere choice of $\beta_k$
\cite{NocedalWright} rather than
the Hager-Zhang step that we used.
Note that \cite{McCormickRitter} predates \cite{HagerZhang} by
several decades.  We conjecture that the analysis of \cite{McCormickRitter} extends to
\cite{HagerZhang}.  

The line-search accuracy required by \cite{McCormickRitter} is as follows.
On each iteration $k$,
$|\alpha_{k}-\alpha_{k}^*|\le O(\Vert\p_{k}\Vert)$, where
$\alpha_{k}$ is chosen by C+AG in \eqref{eq:alphak1}--\eqref{eq:alphak3} and
$\alpha_{k}^*=\argmin\{f(\x_k+\alpha\p_{k}):\alpha\in\R\}$.
We sketch an argument that could prove
$|\alpha_{k}-\alpha_{k}^*|\le O(\Vert\p_{k}\Vert)$
as follows.  

Let $\psi(\alpha):=f(\x_k+\alpha\p_{k})$.  It follows from \cite{McCormickRitter} that
$\Vert\p_{k}\Vert=\Theta(\Vert\x_k-\x^*\Vert)$.  In other words, there is
both an upper and lower bound on
$$\frac{\Vert\p_{k}\Vert}
{\Vert\x_k-\x^*\Vert}$$
independent of $k$.
It also follows from their results that $\psi'(0)<0$.
Write $\psi''(\alpha)=s+\epsilon(\alpha)$, where $s>0$ is the
estimate of $\psi''$ obtained from \eqref{eq:alphak1}--\eqref{eq:alphak3},
that is $s = L(\psi'(1/L)-\psi'(0))$.
It follows by strong convexity and the equation
$\Vert\p_{k}\Vert=\Theta(\Vert\x_k-\x^*\Vert)$
that $\alpha_{k}\le O(1)$
and $\alpha_{k}^*\le O(1)$.
Again by these same assumptions, we can also derive
that $s=\Theta(\Vert\p_{k}\Vert^2)$. 
By the Lipschitz assumption, $\epsilon(\alpha)\le O(\Vert\p_{k}\Vert^3)$
for $\alpha\le O(1)$.

It also follows from the assumption of strict convexity and proximity to the root
$|\psi'(0)|\le O(\Vert\p_{k}\Vert^2)$.  The true minimizer $\alpha_{k}^*$ is the
root of $\psi'$ and therefore satisfies the equation
$$\psi'(0)+\int_0^{\alpha_{k}^*} (s+\eps(\alpha))\,d\alpha = 0,$$
i.e.,
$$\psi'(0)+s\alpha_{k}^* + \int_0^{\alpha_{k}^*} \eps(\alpha)\,d\alpha = 0.$$
The computed minimizer is $\alpha_{k}=-\psi'(0)/s$.
The integral in the previous line is bounded by $O(\Vert\p_{k}\Vert^3)$,
whereas $-\psi'(0)$ is $O(\Vert\p_{k}\Vert^2)$, and $s=\Theta(\Vert\p_{k}\Vert^2)$.
Therefore, the intermediate value theorem applied to the above
univariate equation for $\alpha_{k}^*$ tells us that
$\alpha_{k}^*=-\psi'(0)/s+O(\Vert\p_{k}\Vert)=\alpha_k+O(\Vert\p_{k}\Vert)$.
This concludes the explanation of accuracy of the line-search.

\bibliographystyle{plain}
\bibliography{optimization.bib}

\end{document}